\documentclass{amsart}
\usepackage{calc}
\usepackage{amsmath,amssymb,tabularx}

\theoremstyle{plain}

\newtheorem{theorem}{Theorem}[section]

\newtheorem{proposition}[theorem]{Proposition}

\newtheorem{lemma}[theorem]{Lemma}

\newtheorem{corollary}[theorem]{Corollary}

\newtheorem{remark}[theorem]{Remark}

\theoremstyle{definition}

\newtheorem{definition}[theorem]{Definition}

\begin{document}

\title{Ergodic Properties of Tame Dynamical Systems}


\author{A.V. Romanov}


\address{National Research University Higher School of Economics, Moscow, Russia}

 \email{av.romanov@hse.ru}

\keywords{ergodic means, tame dynamical system, enveloping
semigroup}

\makeatletter
\def\blfootnote{\gdef\@thefnmark{}\@footnotetext}
\makeatother

\blfootnote{\textup{2010} \textit{Mathematics Subject Classification}:
Primary 37A30, 47A35; Secondary 20M20}

\maketitle

\begin{abstract}
We study the problem on the weak-star decomposability of a
topological $\mathbb{N}_{0}$-dynamical system
$(\Omega,\varphi)$, where $\varphi$ is an endomorphism of a
metric compact set~$\Omega$, into ergodic components in
terms of the associated enveloping semigroups. In the tame
case (where the Ellis semigroup $E(\Omega,\varphi)$
consists of $B_{1}$-transformations $\Omega\rightarrow
\Omega$), we show that (i) the desired decomposition exists
for an appropriate choice of the generalized sequential
averaging method; (ii) every sequence of weighted ergodic
means for the shift operator $x\rightarrow x\circ\varphi$,
$x\in C(\Omega)$, contains a pointwise convergent
subsequence. We also discuss the relationship between the
statistical properties of~$(\Omega,\varphi)$ and the mutual
structure of minimal sets and ergodic measures.
\end{abstract}

\maketitle

\section{Introduction}\label{s1}

We are interested in topological $\mathbb{N}_{0}$-dynamical
systems, that is, semicascade $(\Omega,\varphi)$ generated by a
continuous endomorphism~$\varphi$ of a metric compact
set~$\Omega$. The aim of the present paper is to develop a common
point of view on the following three aspects of the theory of such
systems:
\begin{itemize}
\item[(1)] The weak-star convergence of various ergodic means
(averages along the orbits of the system) for scalar test
functions $x\in X\doteq C(\Omega)$ or Radon measures $\mu
\in X^{*}$. (For the case of Ces\'aro means, this approach
goes back to Kryloff--Bogoliouboff~\cite{17} and Oxtoby~\cite{19}.)
\item[(2)] Relations between minimal sets and ergodic measures.
\item[(3)] The decomposability of the dynamical system
$(\Omega,\varphi)$ into irreducible (ergodic) subsystems
depending on the choice of the averaging method.
\end{itemize}

The main results are obtained for the class of tame systems
introduced (under a different name) by K\"{o}hler~\cite{15} and
studied in detail in the papers~\cite{6,7,9,10,14}. There are
several equivalent definitions of tame dynamical systems; for
example, one says that a system $(\Omega,\varphi)$ is tame,
$(\Omega,\varphi)\in \mathcal{D}_{\mathrm{tm}}$, if its Ellis
semigroup consists of endomorphisms of~$\Omega$ belonging to the
first Baire class. The interest in such objects is due to the
relatively simple topology of their enveloping semigroups often
combined with a pretty complex phase dynamics. A number of
assertions on the convergence of generalized ergodic means for
$(\Omega,\varphi)\in \mathcal{D}_{\mathrm{tm}}$ were established
in~\cite{16,21}, where the paper~\cite{16} deals with the more
general case in which arbitrary amenable operator semigroups act
on~$X$. There are reasons to believe that the tame--untame
dichotomy is somehow related to the absence or existence of
chaotic phase dynamics. In any case, every untame semicascade on
$[0,1]$ proves to be chaotic in the sense of Li--Yorke~\cite{15}.

We discuss the following properties of weak-star ergodic (see
Sec.~\ref{s2.1}) operator nets (or sequences) $\mathcal{V}\subset
\mathcal{L}(X^{*})$ identified with the corresponding averaging
methods.
\begin{itemize}
\item[(a)] The convergence of \textit{all} nets (
sequences)~$\mathcal{V}$; the convergence of \textit{some} ergodic
sequences~$\mathcal{V}$.
\item[(b)] The possibility of a statistical description of the
behavior of orbits of $(\Omega,\varphi)$ with the use of
ergodic measures.
\end{itemize}
Properties~(a) of a discrete semiflow $(\Omega,\varphi)$ are
considered in connection with the following dynamic
characteristics:
\begin{itemize}
\item[(i)] The orbital subsystems are uniquely ergodic.
\item[(ii)] The supports of ergodic measures are minimal.
\item[(iii)] The minimal subsystems are uniquely ergodic.
\end{itemize}
Section~\ref{s3} classifies and strengthens the corresponding
results contained in the recent papers~\cite{16,20,21}.
Theorem~\ref{t3.3} establishes that if $(\Omega,\varphi)\in
\mathcal{D}_{\mathrm{tm}}$, then every ergodic sequence~
$\mathcal{V}$ contains a convergent subsequence; in particular,
there exists a convergent subsequence of Ces\'aro means.

The main results of the paper are gathered in Sec.~\ref{s4}. We
show (Theorem~\ref{t4.5}) that a tame dynamical system
$(\Omega,\varphi)$ has distinct (depending on the choice of a
sequential averaging method) decompositions into ergodic
components and describe all such decompositions in terms of some
operator semigroup $\mathcal{K}_{c}\subseteq \mathcal{L}(X^{*})$
related to $(\Omega,\varphi)$. In one interpretation, the
decomposability of $(\Omega,\varphi)\in \mathcal{D}_{\mathrm{tm}}$
into ergodic components implies the existence of an ergodic
sequence $\mathcal{V}$ such that the asymptotic
$\mathcal{V}$-distributions of all orbits are determined by
ergodic measures. Thus, tame semicascades possess property~(b).

Section~\ref{s5} contains a short survey of some typical examples
of tame and untame $\mathbb{N}_{0}$-systems. In particular, we
present the recent results~\cite{18} on the efficient tame--untame
dichotomy for affine semicascades on the tori $\mathbb{T}^{d}$,
$d\geq 1$.

\medskip
Most of the results of the paper were presented by the
author at the 5th Miniworkshop on Operator Theoretic
Aspects of Ergodic Theory held at the Eberhardt Karls
Universit\"{a}t T\"{u}bingen, November 17--18, 2017.

\section{Preliminaries}\label{s2}

We deal with semicascades $(\Omega,\varphi)$, where $\varphi$ is a
continuous endomorphism of a metric compact set~$\Omega$.
Given~$\Omega$, we sometimes identify $(\Omega,\varphi)$ with
$\varphi$ when using terms like ``minimal endomorphism.'' Let
$X=C(\Omega)$, let $Ux=x\circ\varphi$, $x\in X$, be the Koopman
operator, and let $V=U^{*}\in \mathcal{L}(X^{*})$. We have
$\|U\|_{\mathcal{L}(X)}=\|V\|_{\mathcal{L}(X^{*})}=1$. By
$\mathcal{P}(\Omega)$ we denote the convex set of Borel
probability measures on~$\Omega$, which is compact in the
$w^{*}$-topology of the space~$X^{*}$, and by~$X_{1}$ we denote
the subspace of~$X^{**}$ formed by bounded functions of the first
Baire class. Let us present necessary information on ergodic
means, enveloping semigroups associated with $(\Omega,\varphi)$,
and tame dynamical systems.

\subsection{Ergodic means}\label{s2.1}

We slightly modify the classical definition~\cite{2} for the case
of a cyclic semigroup $\{V^{n}\}$ of shift operators and say that
a net $\{V_{\alpha}\} \subseteq \operatorname{co}\{V^{n}$, $n\in
\mathbb{N}_{0}\}$ in $\mathcal{L}(X^{*})$ is ergodic if
\begin{equation}\label{e2.1}
(\operatorname{Id}-V)V_{\alpha}
\overset{\mathrm{W^*O}}{\longrightarrow}0:\quad
(x,(\operatorname{Id}-V)V_{\alpha}\mu)\rightarrow 0,\quad
x\in X,\quad \mu\in X^{*}.
\end{equation}
Here $V_{\alpha}=U_{\alpha}^{*}$, $U_{\alpha} \in \mathcal{L}(X)$,
and the net $\{U_{\alpha}\}\subseteq \operatorname{co}\{U^{n}$,
$n\in \mathbb{N}_{0}\}$ is said to be ergodic as well. If
$V_{\alpha}\overset{\mathrm{W^*O}}{\longrightarrow}Q$, $Q\in
\mathcal{L}(X^{*})$, then $Q^{2}=Q$. In view of the duality
$(Ux,\mu)=(x,V\mu)$, $x\in X$, $\mu \in X^{*}$, the convergence
$V_{\alpha}\overset{\mathrm{W^*O}}{\longrightarrow}Q$ in
$\mathcal{L}(X^{*})$ is equivalent to the convergence
$U_{\alpha}x\overset{w^{*}}{\longrightarrow}Q^{*}x$ in $X^{**}$,
where $x\in X$ and $Q^{*}\in \mathcal{L}(X^{**})$. For ergodic
sequences $\{U_{n}\} \subset \mathcal{L}(X)$, this convergence is
equivalent to the pointwise convergence of functions, $U_{n} x\to
\overline{x} \in X_{1}$. Note that ergodicity is inherited when
passing to subnets and subsequences. When speaking of ergodic
means in what follows, we most often mean operator nets (or
sequences) in~$\mathcal{L}(X^{*})$.

One can obtain various ergodic sequences
$\mathcal{V}=\{V_{n}\}\subset \mathcal{L}(X^{*})$ based on
summation methods for numerical sequences with an infinite
numerical matrix $S=\{s_{n,k}\}$ satisfying the following
conditions:
\begin{itemize}
\item[(1)] $s_{n,k}\geq 0$ and
$\sum_{k=0}^{\infty}s_{n,k}=1$ for each $n\geq 0$.
\item[(2)] Each row of $S$ contains finitely many entries
$s_{n,k}>0$.
\item[(3)] $\lim_{n\rightarrow\infty} (s_{n,0}
+\sum_{k=1}^{\infty}|s_{n,k}-s_{n,k-1}|)=0.$
\end{itemize}
The sequence of operators
$V_{n}=\sum_{k=0}^{\infty}s_{n,k}V^{k}$ proves to be
ergodic, because \break
$\|(\operatorname{Id}-V)V_{n}\|_{\mathcal{L}(X^{*})}\rightarrow
0$ as $n\rightarrow\infty$. For example, the weights
corresponding to appropriate Riesz means have the form
\begin{equation*}
 s_{n,k}=\dfrac {p_{k}}{p_{0}+p_{1}+...+p_{n}}\quad(0\leq k
\leq n),\quad s_{n,k}=0\quad(k>n),
\end{equation*}
where $p_{n}\geq p_{n+1}>0$ and
$\sum_{n=0}^{\infty}p_{n}=\infty$. (One has $p_{n}\equiv 1$
for the Ces\'aro means.)

\subsection{Enveloping semigroups}\label{s2.2}

The Ellis semigroup $E(\Omega,\varphi)$ of a semicascade
$(\Omega,\varphi)$ is the closure of the set $\{\varphi^{n}$,
$n\in \mathbb{N}_{0}\}$ of transformations in the topology of the
direct product $\Omega ^{\Omega }$ \cite{4}. The K\"ohler
semigroup $\mathcal{K}(\Omega,\varphi)$ is the closure of the set
$\mathcal{K}^{0}=\{V^{n}$, $n\in \mathbb{N}_{0}\}$ of operators in
the W$^*$O-topology of the space $\mathcal{L}( X^{*})$ \cite{15}.
Finally, the semigroup $\mathcal{K}_{c}(\Omega,\varphi)$ is
defined as the W$^*$O-closure of the convex hull
$\operatorname{co} \mathcal{K}^{0}$ \cite{20}. The
right-topological semigroups $E(\Omega,\varphi)$,
$\mathcal{K}(\Omega,\varphi)$, and
$\mathcal{K}_{c}(\Omega,\varphi)$ are compact. Actually,
$\mathcal{K}_{c}(\Omega ,\varphi )$ is the enveloping semigroup of
the action $\mathcal{P}\times W\, \stackrel{V}{\longrightarrow}
\mathcal{P}$ on $\mathcal{P}=\mathcal{P}(\Omega )$ of the abelian
semigroup of polynomials $W=\mathrm{co}\{\, t^{n},\, n\ge 0\}$
with the usual multiplication.

Let us present some useful properties of the semigroup
$\mathcal{K}_{c}=\mathcal{K}_{c}(\Omega,\varphi)$
(see~\cite[Sec.~1]{20}). The nonempty kernel $\operatorname{Ker}
\mathcal{K}_{c}$ (the intersection of two-sided ideals) of the
semigroup $\mathcal{K}_{c}$ consists precisely of unit norm
projections $Q \in \mathcal{K}_{c}$ such that $VQ=Q$, or,
equivalently, $QX^{*} = \operatorname{fix}(V)\doteq \{\mu\in
X^{*}:V\mu=\mu\}$. A net $V_{\alpha}\subset \operatorname{co}
\mathcal{K}^{0}$ such that
$V_{\alpha}\overset{\mathrm{W^*O}}{\longrightarrow}T\in
\mathcal{K}_{c}$ is ergodic if and only if
$T\in\operatorname{Ker}\mathcal{K}_{c}$. Every element $Q\in
\operatorname{Ker} \mathcal{K}_{c}$ is the limit of some ergodic
net of operators; i.e., there exist $\mathrm{W^*O}$-convergent
ergodic nets
 for any $\varphi\in
C(\Omega,\Omega)$.

\begin{remark}\label{t2.1}
According to \cite[Theorem~3.2]{20}, all ergodic nets~\eqref{e2.1}
converge if and only if $\operatorname{Ker} \mathcal{K}_{c}$
consists of a single element, which is necessarily the zero
element of the semigroup $\mathcal{K}_{c}$. The paper~\cite{16}
uses a slightly different (wider and more traditional) definition
of ergodic nets; namely, it is assumed that $V_{\alpha}\in
\operatorname{\overline{co}} \mathcal{K}^{0}=\mathcal{K}_{c}$
in~\eqref{e2.1}. Nevertheless, the condition $\operatorname{card}
\operatorname{Ker} \mathcal{K}_{c}=1$ also implies the convergence
of all nets of this kind \cite[Theorem~4.3]{16}.
\end{remark}

\subsection{Tame dynamical systems}\label{s2.3}

Tame $\mathbb{N}_{0}$-systems can be defined as follows in
function-theoretic terms (see~\cite{15}).

\begin{definition}\label{t2.2}
One says that a semicascade $(\Omega,\varphi)$ is tame
($(\Omega,\varphi)\in \mathcal{D}_{\mathrm{tm}}$) if, for any
$x\in X$ and any subsequence $\{ n(k)\}\subseteq \mathbb{N}_{0}$,
\begin{equation*}
 \inf_{a}\biggl\|\sum _{k=0}^{\infty}a_{k}x_{n(k)}\biggr\|_{X}=0,
\end{equation*}
where $x_{n(k)}=x\circ\varphi ^{n(k)}$, the sequences $a\in l^{1}$
have finitely many nonzero terms, and
$\sum_{k=0}^{\infty}\left|a_{k} \right|=1$.
\end{definition}

Essentially, this condition is related to the problem on the
isomorphic embeddability of $l^{1}$ in Banach spaces, which goes
back to Rosenthal~\cite{22}. Let $\Pi_{b}$ and $\Pi_{1}$,
respectively, be the sets of Borel endomorphisms and first Baire
class endomorphisms of~$\Omega$. Each of the following properties
is equivalent to Definition~\ref{t2.2}:
\begin{itemize}
\item[(a)] $E(\Omega ,\varphi )$ is a Fr\'echet--Urysohn compact
set.
\item[(b)] $\operatorname{card} E(\Omega,\varphi ) \le \mathfrak {c}$.
\item[(c)] $\mathcal{K}_{c}(\Omega ,\varphi )$ is a
Fr\'echet--Urysohn compact set.
\item[(d)] $E(\Omega,\varphi)\subset \Pi_{1}$.
\item[(e)] $E(\Omega,\varphi)\subset \Pi_{b}$.
\end{itemize}
Properties~(c) and~(e) as equivalent definitions of a tame
dynamical system arose in \cite[Proposition~3.11]{16} and
\cite[Theorem~3.4]{21}, respectively; the remaining properties can
be found in~\cite{6,7}. Essentially, the semigroups
$E(\Omega,\varphi)$ and $\mathcal{K}_{c}(\Omega,\varphi)$ in
conditions~(a) and~(c) are sequentially compact. According to~(a),
a dynamical system is tame if its Ellis semigroup is metrizable.
Compact subsystems and direct products of tame systems prove to be
tame themselves~\cite{6}.

\section{Convergence of Ergodic Means}\label{s3}

\noindent A criterion for the weak-star convergence of
Ces\'aro means
\begin{equation*}
 U_{n}=\frac{1}{n+1} (I+U+...+U^{n}),\quad\quad
V_{n}=\frac{1}{n+1} (I+V+...+V^{n})
\end{equation*}
was obtained in~\cite[Theorem~1]{11} and extended to arbitrary
ergodic nets $\{U_{\alpha}\}\subset \mathcal{L}(X)$ and
$\{V_{\alpha}\}\subset \mathcal{L}(X^{*})$
in~\cite[Theorem~1.5]{20}. Namely, the following theorem holds.

\begin{theorem}[the separation principle]\label{t3.1}
Let $X_{0}=\{x\in X: U_{\alpha}x
\overset{w^{*}}{\longrightarrow} \overline{x} \in
X^{**}\}$. One has $X_{0}=X$ if and only if the limit
elements $\overline{x}$ separate $\operatorname{fix}(V)$.
\end{theorem}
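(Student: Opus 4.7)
The plan is to pass to the quotient $X/\overline{Y}$, where $\overline{Y}$ is the norm closure of $(\operatorname{Id}-U)X$, and reduce both implications to Hahn--Banach. The key duality $\overline{Y}^{\perp}=\operatorname{fix}(V)$ as a subspace of $X^{*}$ is immediate from $((\operatorname{Id}-U)x,\mu)=(x,(\operatorname{Id}-V)\mu)$ together with the bipolar theorem; consequently the dual of $X/\overline{Y}$ is canonically identified with $\operatorname{fix}(V)$.

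The forward direction is easy: if $X_{0}=X$, then for every $\mu\in\operatorname{fix}(V)$ and every $x\in X$ the relation $V_{\alpha}\mu=\mu$ (valid because $V_{\alpha}\in\operatorname{co}\{V^{n}\}$ and $V\mu=\mu$) gives $(U_{\alpha}x,\mu)=(x,\mu)$, so $\overline{x}(\mu)=(x,\mu)$. Since $X$ separates $X^{*}$, the family $\{\overline{x}\}_{x\in X}$ separates $\operatorname{fix}(V)$.

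For the converse I proceed in three steps. First, $X_{0}$ is a norm-closed linear subspace of $X$: given $x_{n}\to x$ in $X$ with $x_{n}\in X_{0}$ and a fixed $\mu\in X^{*}$, the uniform bound $\|U_{\alpha}\|\leq 1$ implies $|(U_{\alpha}x-U_{\beta}x,\mu)|\leq 2\|x-x_{n}\|\,\|\mu\|+|(U_{\alpha}x_{n}-U_{\beta}x_{n},\mu)|$, so the net $\alpha\mapsto(U_{\alpha}x,\mu)$ is Cauchy and its limit defines an $\overline{x}\in X^{**}$. Second, ergodicity yields $\overline{Y}\subseteq X_{0}$ with limit zero: by~\eqref{e2.1}, $(U_{\alpha}(\operatorname{Id}-U)z,\mu)=(z,(\operatorname{Id}-V)V_{\alpha}\mu)\to 0$ for all $z\in X$, $\mu\in X^{*}$, and the extension to $\overline{Y}$ follows again from the operator-norm bound. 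Third, for $x\in X_{0}$ the computation of the forward direction still gives $\overline{x}|_{\operatorname{fix}(V)}=x|_{\operatorname{fix}(V)}$, so the separation hypothesis on $\{\overline{x}\}_{x\in X_{0}}$ is equivalent to saying that the closed subspace $X_{0}/\overline{Y}$ separates $(X/\overline{Y})^{*}=\operatorname{fix}(V)$. By Hahn--Banach a proper closed subspace of $X/\overline{Y}$ admits a nontrivial annihilator in the dual, so this forces $X_{0}/\overline{Y}=X/\overline{Y}$, and since $X_{0}\supseteq\overline{Y}$ we conclude $X_{0}=X$.

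The main obstacle I anticipate is organisational rather than computational: one must use ergodicity exactly once, in the step establishing $\overline{Y}\subseteq X_{0}$, and then rely throughout on the duality $\overline{Y}^{\perp}=\operatorname{fix}(V)$, so that the statement about separation of $\operatorname{fix}(V)$ collapses to norm-density of a closed subspace and is resolved by a one-line Hahn--Banach argument.
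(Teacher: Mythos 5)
Your proof is correct. The paper does not actually prove Theorem~\ref{t3.1} (it is quoted from Iwanik~\cite{11} and Romanov~\cite[Theorem~1.5]{20}), but your argument --- closedness of $X_{0}$, the inclusion $\overline{(\operatorname{Id}-U)X}\subseteq X_{0}$ with limit $0$ via ergodicity, the identification $\overline{Y}^{\perp}=\operatorname{fix}(V)$, the observation that $\overline{x}$ and $x$ agree on $\operatorname{fix}(V)$, and a final Hahn--Banach step --- is exactly the standard route taken in those sources.
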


The latter condition means that for each invariant measure
$\mu=V\mu,\;\mu\in X^{*},$ there exist continuous functions
$x_{1},x_{2}\in X_{0}$ such that $(\overline{x}_{1},\mu)\neq
(\overline{x}_{2},\mu)$. Further, $X_{0}$ is a nonempty closed
$U$-invariant linear subspace of~$X$, $\overline{x}=Tx$,
$T\in\mathcal{L}(X_{0},X^{**})$, and $\|T\|=1$. In the case of
ergodic sequences, one has $\overline{x} \in X_{1}$.

We use the following notation for an $\mathbb{N}_{0}$-dynamical
system $(\Omega,\varphi)$: $m\subseteq \Omega$ is a minimal set;
$\mu_{e}\in \mathcal{P}(\Omega)$ is an ergodic measure;
$\overline{o}(\omega)$ is the closure of the orbit
$o(\omega)=\{\varphi^{n}\omega$, $n\geq0\}$ of an element
$\omega\in \Omega$. We are interested in the following dynamic
properties of $(\Omega,\varphi)$ and ergodic operator nets
$\mathcal{V}\subset \mathcal{L}(X^{*})$:
\begin{description}
    \item[(single $m$ in $\overline{o}$)] Every
$\overline{o}(\omega)$ contains a single $m$.
    \item[($\operatorname{supp} \mu_{e}=m$)]
    The supports of $\mu_{e}$ are
minimal.
    \item[(single $\mu_{e}$ on $m$)] The minimal subsystems
$(m,\varphi)$ are uniquely ergodic.
    \item[$\operatorname{UE} (\overline{o})$] The orbital subsystems
$(\overline{o}(\omega),\varphi)$ are uniquely ergodic.
    \item[(AEN)] All sets $\mathcal{V}$ converge.
    \item[(AES)] All sequences $\mathcal{V}$ converge.
    \item[(SES)] Some sequence $\mathcal{V}$ converges.
\end{description}
Let us single out some general relations between these
properties.

\begin{lemma}\label{t3.2}
The following implications hold for an arbitrary
semicascade~$(\Omega,\varphi)$:
\begin{itemize}
    \item [(i)] \quad $(\mathrm{AEN})$ $\Rightarrow$
$\operatorname{UE} (\overline{o})$ $\Rightarrow$ $(\mathrm{AES})$.
    \item [(ii)] \quad $(\mathrm{SES})$ $\Rightarrow$ $(\mathrm{single}\, \mu_{e}$
$\mathrm{on}\,m)$.
    \item [(iii)] \quad $\operatorname{UE} (\overline{o})$
    $\Leftrightarrow$
$(\mathrm{single}$ $m \,\mathrm{in}\, \overline{o})$
$+(\operatorname{supp} \mu_{e}=m)$ $+(\mathrm{single}\, \mu_{e}$
$\mathrm{on}\,m)$.
\end{itemize}
\end{lemma}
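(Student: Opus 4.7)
My plan divides along the three parts; I would dispose of (iii) first by formal ergodic decomposition, and then the two analytic implications (i) and (ii).

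For (iii), in the ($\Leftarrow$) direction: for any $\omega$ and any invariant probability $\mu$ on $\overline{o}(\omega)$, decompose $\mu$ into ergodic components; each has support a minimal set inside $\overline{o}(\omega)$ (by $\operatorname{supp}\mu_{e}=m$), that minimal set is the unique one (by (single $m$ in $\overline{o}$)), and only one ergodic measure lives on it (by (single $\mu_{e}$ on $m$)), so $\mu$ is uniquely determined. The ($\Rightarrow$) direction: if any of the three properties fails, two distinct invariant probabilities appear on some $\overline{o}(\omega)$---either from two minimal subsets, or from $\mu_{e}$ together with the invariant measure on a proper minimal subset of $\operatorname{supp}\mu_{e}$ (for a $\mu_{e}$-generic $\omega$, so that $\overline{o}(\omega)=\operatorname{supp}\mu_{e}$), or from two ergodic measures on a minimal set which is itself an orbit closure.

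For (i), the implication $(\mathrm{AEN})\Rightarrow\mathrm{UE}(\overline{o})$ realises an arbitrary invariant measure as the limit of an ergodic net. Fix $\omega$ and let $\mu$ be any invariant probability on $\overline{o}(\omega)$; by Krylov--Bogoliubov, $\mu$ is weak-$\ast$ approximated by block Cesàro sums $\frac{1}{n_{k}}\sum_{j<n_{k}}\delta_{\varphi^{j}y_{k}}$, $y_{k}\in\overline{o}(\omega)$, and replacing each $y_{k}$ by $\varphi^{m_{k}}\omega$ (density of $o(\omega)$ plus continuity of finitely many $\varphi^{j}$) yields operators $V_{\alpha}=\frac{1}{n_{\alpha}}\sum_{j<n_{\alpha}}V^{m_{\alpha}+j}$ with $\|(\operatorname{Id}-V)V_{\alpha}\|\le 2/n_{\alpha}\to 0$ and $V_{\alpha}\delta_{\omega}\to\mu$. (AEN) forces $V_{\alpha}\to Q$ in $\mathrm{W^{*}O}$, so $\mu=Q\delta_{\omega}$ is unique. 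For $\mathrm{UE}(\overline{o})\Rightarrow(\mathrm{AES})$, unique ergodicity on each orbit closure gives uniform Cesàro convergence pointwise on $\Omega$; Silverman--Toeplitz (via the regularity conditions (1)--(3) on $s_{n,k}$) transfers this to every ergodic sequence $V_{n}$, and bounded convergence against any $\mu\in X^{*}$ delivers $V_{n}\to Q$ in $\mathrm{W^{*}O}$.

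For (ii), the convergent sequence $V_{n}\to Q$ gives at each $\omega$ in a minimal $m$ the limit $V_{n}\delta_{\omega}\to Q\delta_{\omega}$, an invariant probability on $m$. I would close via an Oxtoby-type principle for minimal systems: pointwise convergence of a Toeplitz-regular ergodic averaging at a single point of $m$ forces unique ergodicity. The functional $L(x):=\lim_{n}U_{n}x(\omega)$ is positive, normalised and shift-invariant---condition (3) gives $|U_{n}x(\varphi\omega)-U_{n}x(\omega)|\le\|x\|\bigl(s_{n,0}+\sum_{k\ge 1}|s_{n,k}-s_{n,k-1}|\bigr)\to 0$---so by Riesz it represents an invariant measure on $m$; minimality and asymptotic shift-invariance then propagate the convergence across $m$ and exclude a second invariant measure. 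The main obstacle I anticipate is precisely this Oxtoby-style step in (ii): translating one-point convergence to uniqueness of the invariant measure requires an interchange of iterated limits along an approximating orbit $\varphi^{m_{k}}\omega\to y$ in $m$ which, without equicontinuity of $\{U_{n}x\}$, is the delicate analytic point, made to work precisely by the regularity condition (3).
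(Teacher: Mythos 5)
Your part (iii) is fine (the paper dismisses it as trivial), and your construction for $(\mathrm{AEN})\Rightarrow\operatorname{UE}(\overline{o})$ is workable once you restrict it to \emph{ergodic} measures, which suffices since two distinct invariant measures force two distinct ergodic ones: a general invariant measure on $\overline{o}(\omega)$ need \emph{not} be a weak-star limit of single-block sums $\frac{1}{n_k}\sum_{j<n_k}\delta_{\varphi^{j}y_k}$ (only of convex combinations of such), whereas each ergodic measure is, via its generic points. The paper handles both implications of (i) by citation. For $\operatorname{UE}(\overline{o})\Rightarrow(\mathrm{AES})$ your Silverman--Toeplitz step is the wrong mechanism: regularity of a matrix method does not let it sum a merely Ces\`aro-summable sequence, and an ergodic sequence in the sense of~\eqref{e2.1} need not satisfy condition~(3) anyway. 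The correct route is to note that every $w^{*}$-limit point of $V_{n}\delta_{\omega}$ is an invariant probability supported on $\overline{o}(\omega)$, hence equals the unique one; thus $U_{n}x\to\overline{x}$ pointwise and boundedly, and dominated convergence gives convergence of $(x,V_{n}\mu)$ for every Radon measure $\mu$.

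The genuine gap is in (ii), exactly at the spot you flag yourself. You cannot propagate the one-point limit $L(x)=\lim_{n}U_{n}x(\omega)$ across $m$ by interchanging $\lim_{n}$ with $\lim_{k}$ along $\varphi^{m_{k}}\omega\to y$: the two limits pull in opposite directions, and condition~(3) supplies asymptotic shift-invariance but not the equicontinuity of $\{U_{n}x\}$ needed to close that interchange. Moreover, there is nothing to propagate --- the convergence $U_{n}x(y)\to\overline{x}(y)$ is already given at \emph{every} $y$ by hypothesis (SES); what must be proved is that $\overline{x}$ is \emph{constant} on $m$. The paper's argument is a Baire-category one that bypasses the limit interchange entirely: $\overline{x}$ is a bounded pointwise limit of continuous functions, hence of the first Baire class, so its restriction to the compact metric space $m$ has a point of continuity; it is invariant ($\overline{x}\circ\varphi=\overline{x}$ follows from~\eqref{e2.1} alone), hence constant on each orbit, and since every orbit in $m$ is dense, a non-constant $\overline{x}|_{m}$ would be everywhere discontinuous --- a contradiction. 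Once $\overline{x}|_{m}\equiv c(x)$, any two invariant probabilities $\mu_{1},\mu_{2}$ on $m$ satisfy $(x,\mu_{i})=(\overline{x},\mu_{i})=c(x)$ for all $x$, so $\mu_{1}=\mu_{2}$; this is the separation principle of Theorem~\ref{t3.1} applied to $(m,\varphi)$. This Baire-class-one step is the idea missing from your proposal.
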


\begin{proof}
The implication (AEN) $\Rightarrow$
$\operatorname{UE}(\overline{o})$ follows from \cite[Lemma
5.9]{16} with regard to Remark~\ref{t2.1}. The implication
$\operatorname{UE}(\overline{o})$ $\Rightarrow$ (AES) was
established in \cite[Theorem 3.2]{20}. Claim (ii) slightly
generalizes Theorem~5.4 in~\cite{19}. If there exists a convergent
ergodic operator sequence $V_{n}=U^{*}_{n}$ and the set
$m\subseteq \Omega$ is minimal, then $(U_{n}x)(\omega)\rightarrow
\overline{x}(\omega)$ for any $x\in X$ and $\omega\in m$, and
$\overline{x}(\varphi\omega)\equiv \overline{x}(\omega)$ on $m$.
Since the all orbits $o(\omega)\subseteq m$ are dense, it follows
that the restriction $\overline{x}|_{m}$ is either constant or
everywhere discontinuous. The latter is impossible for a function
of the first Baire class, and the dynamical system $(m,\varphi)$
is uniquely ergodic according to, say, the separation principle in
Theorem~\ref{t3.1}. Claim~(iii) is trivial.
\end{proof}

We see that if some minimal set supports more than one ergodic
measure, then there exist no convergent ergodic sequences (even
though there always exist convergent ergodic nets). This effect
holds for some minimal analytic diffeomorphisms of the
torus~$\mathbb{T}^{2}$ which have uncountably many ergodic
measures~\cite[Corollary~12.6.4]{12}.

In the tame case, one can say much more about the
convergence of ergodic means.
\begin{theorem}\label{t3.3}
The following assertions hold for a
tame $\mathbb{N}_{0}$-system $(\Omega,\varphi)$.
\begin{itemize}
\item[(i)] Every ergodic operator net $\{V_{\alpha}\}\subset
\mathcal{L}(X^{*})$ contains a convergent ergodic sequence
$V_{\alpha(n)}$. \item[(ii)] Every ergodic operator sequence
$\{V_{n}\}\subset \mathcal{L}(X^{*})$ contains a convergent
ergodic subsequence. In particular, the Ces\'aro means contain a
convergent subsequence.
\end{itemize}
\end{theorem}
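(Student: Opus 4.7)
The overall plan is to leverage the two key consequences of tameness for $\mathcal{K}_{c}=\mathcal{K}_{c}(\Omega,\varphi)$ recorded in Sections~\ref{s2.2}--\ref{s2.3}: first, that $\mathcal{K}_{c}$ is a \emph{compact Fréchet--Urysohn} space (property~(c)), and second, the characterization that a net $\{V_{\alpha}\}\subset\operatorname{co}\mathcal{K}^{0}$ with $V_{\alpha}\overset{\mathrm{W^*O}}{\longrightarrow}T\in\mathcal{K}_{c}$ is ergodic if and only if $T\in\operatorname{Ker}\mathcal{K}_{c}$. Taken together, these allow me to convert abstract W$^*$O-cluster points of ergodic nets into genuine sequential limits, which is exactly what the theorem demands.

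For part~(ii), the sequence $\{V_{n}\}$ lies in $\operatorname{co}\mathcal{K}^{0}\subseteq \mathcal{K}_{c}$, which (as noted right after the list of equivalences in Sec.~\ref{s2.3}) is sequentially compact because it is compact and Fréchet--Urysohn. I would extract a W$^*$O-convergent subsequence $V_{n_{k}}\to T\in\mathcal{K}_{c}$. Ergodicity is inherited by subsequences, so $\{V_{n_{k}}\}$ is still ergodic; by the characterization just recalled, $T\in\operatorname{Ker}\mathcal{K}_{c}$, and this is the required convergent ergodic subsequence. The Cesàro assertion is then immediate, since the Cesàro means constitute an ergodic sequence (the Riesz case $p_{n}\equiv 1$ in Sec.~\ref{s2.1}).

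For part~(i), compactness of $\mathcal{K}_{c}$ first furnishes a subnet of $\{V_{\alpha}\}$ that W$^*$O-converges to some $T\in\mathcal{K}_{c}$; inheritance of ergodicity and the kernel characterization again place $T\in\operatorname{Ker}\mathcal{K}_{c}$. In particular, $T$ lies in the W$^*$O-closure of the \emph{set} $\{V_{\alpha}\}_{\alpha}$, and invoking the Fréchet--Urysohn property of $\mathcal{K}_{c}$ produces an honest sequence $V_{\alpha(n)}$ drawn from the net with $V_{\alpha(n)}\overset{\mathrm{W^*O}}{\longrightarrow}T$. Ergodicity of this sequence is automatic: left multiplication by $V$ is W$^*$O-continuous via the identity $(x,VV_{\alpha(n)}\mu)=(Ux,V_{\alpha(n)}\mu)$ for $x\in X$, $\mu\in X^{*}$, and $VT=T$, so $(\operatorname{Id}-V)V_{\alpha(n)}\to T-T=0$. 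The one genuinely non-routine step, and hence the main obstacle, is exactly this passage from a cluster point to a sequential limit in~(i): in a general right-topological compact semigroup such a passage fails, and it is precisely the Fréchet--Urysohn incarnation of tameness that licenses it. Everything else is bookkeeping against the structural facts already assembled in Sections~\ref{s2.1}--\ref{s2.3}.
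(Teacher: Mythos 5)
Your argument is correct and follows essentially the same route as the paper: compactness of $\mathcal{K}_{c}$ to land on a kernel element, the Fr\'echet--Urysohn property to replace the net by a sequence in part~(i) and to get sequential compactness in part~(ii), and the kernel characterization of ergodicity to confirm the extracted sequences are ergodic. Your explicit verification that $(\operatorname{Id}-V)V_{\alpha(n)}\to 0$ via continuity of left multiplication by $V$ just unwinds the criterion the paper cites directly from Sec.~\ref{s2.2}.
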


\begin{proof}
Since $\mathcal{K}_{c}=\mathcal{K}_{c}(\Omega,\varphi)$ is
compact, we assume without loss of generality that
$V_{\alpha}\overset{\mathrm{W^*O}}{\longrightarrow}Q$, where $Q\in
\operatorname{Ker} \mathcal{K}_{c}$. For the tame semicascade
$(\Omega,\varphi)$, the topological space $\mathcal{K}_{c}$ is a
Fr\'echet--Urysohn compact set, and hence the net $\{V_{\alpha}\}$
contains a sequence
$V_{\alpha(n)}\overset{\mathrm{W^*O}}{\longrightarrow}Q$, and this
sequence is ergodic, because $Q\in \operatorname{Ker}
\mathcal{K}_{c}$. Claim~(ii) follows from the sequential
compactness of~$\mathcal{K}_{c}$ and from the preservation of
ergodicity when passing to subsequences.
\end{proof}

The ergodic sequence $\{V_{\alpha(n)}\}$ in
Theorem~\ref{t3.3}\,(i) is not a subsequence of the net
$\{V_{\alpha}\}$ in general. Now let us find out how the ergodic
and dynamic properties of tame systems are related.

\begin{theorem}\label{t3.4}
A tame $\mathbb{N}_{0}$-system $(\Omega,\varphi)$ possesses
property~$(\mathrm{SES})$, and one has the equivalences

\centerline{$(\mathrm{AEN})$ $\Leftrightarrow$ $\operatorname{UE}
(\overline{o})$ $\Leftrightarrow$ $(\mathrm{AES})$
$\Leftrightarrow$ $(\mathrm{single}$ $m$
$\mathrm{in}\,\overline{o})$.}

\end{theorem}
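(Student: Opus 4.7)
The plan is to close a cycle of implications. Three of the four equivalences are already furnished by Lemma~\ref{t3.2}: $(\mathrm{AEN})\Rightarrow\mathrm{UE}(\overline{o})\Rightarrow(\mathrm{AES})$ from part~(i) and $\mathrm{UE}(\overline{o})\Rightarrow(\mathrm{single}\ m\ \mathrm{in}\ \overline{o})$ from part~(iii). Property $(\mathrm{SES})$ is immediate: by Theorem~\ref{t3.3}(ii), the ergodic sequence of Ces\`aro means contains a convergent subsequence. Only two implications remain to be treated in the tame setting, namely $(\mathrm{AES})\Rightarrow(\mathrm{AEN})$ and $(\mathrm{single}\ m\ \mathrm{in}\ \overline{o})\Rightarrow\mathrm{UE}(\overline{o})$.

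For $(\mathrm{AES})\Rightarrow(\mathrm{AEN})$, I would first show by an interleaving argument that under $(\mathrm{AES})$ all convergent ergodic sequences share a common limit: given $V_n^{(1)}\to Q_1$ and $V_n^{(2)}\to Q_2$, the interleaved sequence $W_{2n}=V_n^{(1)}$, $W_{2n+1}=V_n^{(2)}$ is still ergodic (it lies in $\mathrm{co}\,\mathcal{K}^{0}$, and $(\mathrm{Id}-V)W_k\to 0$ since both the even- and odd-indexed subsequences do), so by $(\mathrm{AES})$ it converges, forcing $Q_1=Q_2=:Q^{\ast}$. Then, for any ergodic net $\{V_\alpha\}$ and any $\mathrm{W^\ast O}$-cluster point $Q'$, I would pass to a subnet $V_{\alpha_\beta}\to Q'$ and apply Theorem~\ref{t3.3}(i) to extract inside it a convergent ergodic sequence. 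Its limit is both $Q'$ and $Q^{\ast}$, so $Q'=Q^{\ast}$, and compactness of $\mathcal{K}_c$ forces $V_\alpha\to Q^{\ast}$.

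For $(\mathrm{single}\ m\ \mathrm{in}\ \overline{o})\Rightarrow\mathrm{UE}(\overline{o})$, Lemma~\ref{t3.2}(iii) reduces the task to verifying $(\mathrm{supp}\,\mu_e=m)$, since $(\mathrm{single}\ \mu_e\ \mathrm{on}\ m)$ is already given by Lemma~\ref{t3.2}(ii) combined with the just-proved $(\mathrm{SES})$. Fix ergodic $\mu_e$, set $K=\mathrm{supp}\,\mu_e$, and let $m\subseteq K$ be its unique minimal subset, carrying the unique ergodic measure $\mu_m$; suppose $K\neq m$ for contradiction. Theorem~\ref{t3.3}(ii) produces a convergent subsequence of Ces\`aro means with pointwise Baire-1 limit $\overline{x}=\lim U_{n_k}x$, which is $\varphi$-invariant, equals $c_m=\int x\,d\mu_m$ on $m$ (by unique ergodicity), and equals $c_e=\int x\,d\mu_e$ on a $\mu_e$-full, and hence dense-in-$K$, subset of $K$ (by Birkhoff's theorem).

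The main obstacle is to force $c_e=c_m$, equivalently $\mu_e=\mu_m$, so as to contradict $K\neq m$. When $m$ has nonempty interior in $K$, density of the continuity points of the Baire-1 function $\overline{x}|_K$ yields a $p\in m$ at which $\overline{x}$ is continuous, and approaching $p$ through the dense $\{c_e\}$-subset gives $c_m=c_e$. The harder case, where $m$ is nowhere dense in $K$, uses the Ellis semigroup more deeply: from $(\mathrm{single}\ m\ \mathrm{in}\ \overline{o})$ together with the fact that $\mathrm{Ker}\,E\cdot\omega$ equals the union of minimal sets in $\overline{o}(\omega)$, any minimal idempotent $u\in\mathrm{Ker}\,E(\Omega,\varphi)$ satisfies $u(K)\subseteq m$; tameness ensures $u\in\Pi_{1}$ with $u\circ\varphi=\varphi\circ u$. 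Then $u_{\ast}\mu_e$ is a $\varphi$-invariant probability measure on $m$, hence $u_{\ast}\mu_e=\mu_m$. Combining this with the orbit-invariance $Q\delta_{\varphi^{j}\omega}=Q\delta_\omega$ and with a subsequence $\varphi^{n_j}\omega_0\to u(\omega_0)\in m$ produced by tameness for a $\mu_e$-generic $\omega_0$, one identifies $\mu_e=Q\delta_{\omega_0}$ with $\mu_m=Q\delta_{u(\omega_0)}$, yielding the desired contradiction $\mu_e=\mu_m$.
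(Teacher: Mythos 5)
Your first half is sound and essentially reproduces the paper's own argument: $(\mathrm{SES})$ from Theorem~\ref{t3.3}\,(ii), and $(\mathrm{AES})\Rightarrow(\mathrm{AEN})$ via the interleaving trick combined with Theorem~\ref{t3.3}\,(i) (the paper phrases this as $\operatorname{card}\operatorname{Ker}\mathcal{K}_{c}=1$ and then cites \cite[Theorem~3.2]{20}, whereas you argue directly with cluster points of ergodic nets; both routes work). The divergence, and the problem, is in the last implication. The paper simply closes the cycle by citing \cite[Theorem~4.6]{21} for $(\mathrm{single}\ m\ \mathrm{in}\ \overline{o})\Rightarrow(\mathrm{AES})$; you instead attempt a self-contained proof of $(\operatorname{supp}\mu_{e}=m)$, and in the case where $m$ is nowhere dense in $K=\operatorname{supp}\mu_{e}$ your argument has a genuine gap. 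The final identification $\mu_{e}=Q\delta_{\omega_{0}}=Q\delta_{u(\omega_{0})}=\mu_{m}$ requires passing the limit $\delta_{\varphi^{n_{j}}\omega_{0}}\stackrel{w^{*}}{\longrightarrow}\delta_{u(\omega_{0})}$ through the projection $Q$, i.e.\ weak-star continuity of $\mu\mapsto Q\mu$ on Dirac measures. But $Q$ is only a $\mathrm{W^{*}O}$-limit of the $w^{*}$-continuous operators $V_{n}$, and the induced map $\omega\mapsto Q\delta_{\omega}$ is merely of the first Baire class --- if it were continuous, $\overline{x}=Q^{*}x$ would be continuous, which is exactly what fails in general. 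Orbit-invariance $Q\delta_{\varphi^{j}\omega}=Q\delta_{\omega}$ gives you nothing at the limit point $u(\omega_{0})$, which need not lie on the orbit of $\omega_{0}$.

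The irony is that you already have in hand everything needed for a correct and shorter finish, with no case distinction, no generic point, and no mention of $Q$. You correctly establish $u_{*}\mu_{e}=\mu_{m}$ (a $\varphi$-invariant probability measure carried by the uniquely ergodic minimal set $m$). Now observe that $u_{*}\mu_{e}=\mu_{e}$: since tameness furnishes a sequence $\varphi^{n_{j}}\to u$ pointwise on $\Omega$, for every $x\in X$ dominated convergence gives $(x,(\varphi^{n_{j}})_{*}\mu_{e})=\int x\circ\varphi^{n_{j}}\,d\mu_{e}\to\int x\circ u\,d\mu_{e}=(x,u_{*}\mu_{e})$, while the left-hand side is identically $(x,\mu_{e})$ by $\varphi$-invariance of $\mu_{e}$. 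Hence $\mu_{e}=u_{*}\mu_{e}=\mu_{m}$, so $\operatorname{supp}\mu_{e}=m$, which is what you wanted. (You should also spell out why $K$ contains a \emph{unique} minimal set: $\mu_{e}$-almost every $\omega$ has $\overline{o}(\omega)\supseteq K$, so any minimal subset of $K$ lies in such an $\overline{o}(\omega)$ and $(\mathrm{single}\ m\ \mathrm{in}\ \overline{o})$ applies.) With that repair your route is a legitimate alternative to the paper's citation of \cite[Theorem~4.6]{21}; as written, the last step does not go through.
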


\begin{proof}
The existence of convergent operator ergodic sequences for a tame
semicascade was established in Theorem~\ref{t3.3}. Assume that all
such sequences converge and there exist two distinct elements
$Q_{1},Q_{2}\in \operatorname{Ker}
\mathcal{K}_{c}(\Omega,\varphi)$. By Theorem~\ref{t3.3}\,(i),
there exist ergodic sequences
$V_{n}^{(1)}\overset{\mathrm{W^*O}}{\longrightarrow} Q_{1}$ and
$V_{n}^{(2)}\overset{\mathrm{W^*O}}{\longrightarrow} Q_{2}$. Then
the mixed sequence $V_{2n-1}=V_{n}^{(1)}$, $V_{2n}=V_{n}^{(2)}$ is
ergodic but divergent. Thus, property (AES) implies the relation
$\operatorname{card} \operatorname{Ker} \mathcal{K}_{c}=1$, which
is equivalent to property~(AEN) by~\cite[Theorem 3.2]{20}, and for
tame systems we have (AEN) $\Leftrightarrow$ $\operatorname{UE}
(\overline{o})$ $\Leftrightarrow$ (AES) by Lemma~\ref{t3.2}\,(i).
Finally, Theorem~4.6 in~\cite{21} provides the implication (single
$m$ in $\overline{o}$) $\Rightarrow$ (AES), and it remains to note
that $\operatorname{UE} (\overline{o})$ $\Rightarrow$ (single $m$
in $\overline{o}$).
\end{proof}

Independently, the equivalence (AEN) $\Leftrightarrow$ (single $m$
in $\overline{o}$) for $(\Omega,\varphi)\in
\mathcal{D}_{\mathrm{tm}}$ was established in
\cite[Theorem~5.10]{16}. Theorem~\ref{t3.4}, in particular,
ensures the uniquely ergodicity of minimal tame semicascades, a
fact obtained for a wider class of tame systems as early as
in~\cite{10,14}. This assertion was strengthened
in~\cite[Lemma~5.12]{16}: the uniqueness of a minimal set
$m\subseteq \Omega$ implies the uniquely ergodicity of
$(\Omega,\varphi)\in \mathcal{D}_{\mathrm{tm}}$. In this
connection, it is useful to state the following remark.

\begin{remark}\label{t3.5}
The supports of ergodic measures of tame
$\mathbb{N}_{0}$-systems are either minimal or contain more
than one minimal set.
\end{remark}

On the other hand, it follows from~\cite[Theorem~3.1]{13} that if
an arbitrary semicascade $(\Omega,\varphi)$ has a unique minimal
set and the Ces\'aro means are weakly-star convergent, then there
exists either one ergodic measure of uncountably many such
measures. The second possibility can indeed be
realized~\cite[Sec.~4]{13}.

\begin{remark}\label{t3.6}
Even for tame systems, the convergence of one ergodic sequence
does not imply the convergence of all other ergodic sequences;
i.e., $(\mathrm{SES})$ $\nRightarrow$ $(\mathrm{AES})$. Namely, a
tame Bernoulli subshift for which the Ces\'aro means are
convergent but the property (single $m$ in $\overline{o}$) is not
satisfied was constructed in~\cite[Example~5.14]{16}. By
Theorem~\ref{t3.4}, this semicascade does not satisfy
condition~$(\mathrm{AES})$ either.
\end{remark}

\section{Asymptotic Distributions of Orbits}\label{s4}

Here we transfer some constructions in~\cite{17,19} related to the
pointwise convergence on~$\Omega$ of the Ces\'aro means~$U_{n}x$
for continuous test functions $x\in X=C(\Omega)$ to arbitrary
ergodic sequences. Instead of the individual ergodic theorem
(which fails for general averaging methods), we use a~priori
information on the pointwise convergence of some generalized
ergodic means. Our main task is to establish the possibility of
decomposition of a tame dynamical system into irreducible
(ergodic) components. By $\mathcal{P}_{in}(\Omega)$ and
$\mathcal{P}_{e}(\Omega)$ we denote the subsets of
$\varphi$-invariant and $\varphi$-ergodic measures, respectively,
in $\mathcal{P}(\Omega)$, and by $X_{1}$ we denote the set of
bounded scalar functions of the first Baire class on $\Omega$. A
set $\Theta\subseteq \Omega$ is \textit{bi-invariant} if
$\varphi^{-1}\Theta=\Theta$. Further, let $D(\Omega)$ be the set
of Dirac measures $\delta_{\omega}$ on $\Omega$, and let
$\mathcal{K}_{c}=\mathcal{K}_{c}(\Omega,\varphi)\subseteq
\mathcal{L}(X^{*})$ be the operator semigroup defined in
Sec.~\ref{s2.2}.

We assume the existence of a convergent ergodic operator
sequence $\mathcal{V}=\{V_{n}\}\subset \mathcal{L}(X^{*})$,
$V_{n} \overset{\mathrm{W^*O}}{\longrightarrow}Q\in
\operatorname{Ker} \mathcal{K}_{c}$, and write this
convergence briefly as $\mathcal{V}\rightarrow Q$. In this
case, $(U_{n}x)(\omega)\rightarrow \overline{x}(\omega)$
for the dual ergodic sequence $\{U_{n}\}\subset
\mathcal{L}(X)$, $U_{n}^{*}=V_{n},$ and all $\omega\in
\Omega$ and $x\in X$; furthermore, the function
$\overline{x} \in X_{1}$ is invariant
$(\overline{x}\circ\varphi=\overline{x})$, and to each
point $\omega\in \Omega$ there corresponds a measure
$\mu_{\omega} = Q\delta_{\omega}\in
\mathcal{P}_{in}(\Omega)$,
$\overline{x}(\omega)=(x,\mu_{\omega})$, determining the
asymptotic $\mathcal{V}$-distribution of the orbit
$o(\omega)$. Essentially, this means that
$V_{n}\delta_{\omega}\stackrel{w^{*}}{\longrightarrow}\mu_{\omega}$.
A linear projection~$Q$ in $X^{*}$ induces a mapping
$\Psi_{\mathcal{V}}:\Omega\rightarrow
\mathcal{P}_{in}(\Omega)$ of the first Baire class. The
notation $\Psi_{\mathcal{V}}$ is convenient, even though
this mapping is completely determined by the limit
element~$Q$ of the sequence~$\mathcal{V}$.

\begin{lemma}\label{t4.1}
If an ergodic sequence
$\mathcal{V}=\{V_{n}\}$ is convergent,
then $\Psi_{\mathcal{V}}\Omega
\supseteq\mathcal{P}_{e}(\Omega)$.
\end{lemma}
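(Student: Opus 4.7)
The plan is to show that for every ergodic measure $\mu_e\in\mathcal{P}_e(\Omega)$ there exists some $\omega\in\Omega$ with $\Psi_{\mathcal{V}}(\omega)=\mu_\omega=\mu_e$. The starting observation is that $\mu_e$ is $V$-invariant, i.e.\ $V^{k}\mu_e=\mu_e$ for every $k\ge 0$. Since each $V_n$ is a convex combination of the $V^{k}$, this gives $V_n\mu_e=\mu_e$ for every $n$, and passing to the $\mathrm{W^{*}O}$-limit yields $Q\mu_e=\mu_e$.

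Next, for each $x\in X$ I would consider the function $\overline{x}(\omega)=(x,\mu_\omega)=(x,Q\delta_\omega)=(Q^{*}x)(\omega)$, which is bounded and of the first Baire class, hence Borel measurable; and, as noted in the paragraph preceding the lemma, $\overline{x}$ is $\varphi$-invariant on $\Omega$. Integrating against $\mu_e$ and using $Q\mu_e=\mu_e$ gives $\int_\Omega \overline{x}\,d\mu_e=(Q^{*}x,\mu_e)=(x,Q\mu_e)=(x,\mu_e)$. Since $\mu_e$ is ergodic, every Borel $\varphi$-invariant function is $\mu_e$-almost everywhere constant; therefore $\overline{x}(\omega)=(x,\mu_e)$ off a $\mu_e$-null set $N_x$.

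Finally, I would exploit the separability of $X=C(\Omega)$ (available because $\Omega$ is a metric compact): pick a countable norm-dense subset $\{x_k\}\subset X$ and set $N=\bigcup_k N_{x_k}$, still a $\mu_e$-null set. For every $\omega\in\Omega\setminus N$ one has $(x_k,\mu_\omega)=(x_k,\mu_e)$ for all $k$, and density then forces $\mu_\omega=\mu_e$ as elements of $X^{*}$. Since $\mu_e(\Omega\setminus N)=1$, such an $\omega$ exists and witnesses $\mu_e\in\Psi_{\mathcal{V}}\Omega$.

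The only real subtlety is the middle step, namely the appeal to ergodicity in the form \emph{every Borel-measurable $\varphi$-invariant function is $\mu_e$-a.e.\ constant}. This is a standard equivalent characterization of ergodicity, and it applies to $\overline{x}$ precisely because Baire-class-one functions are Borel measurable. The remaining ingredients (the identity $Q\mu_e=\mu_e$, the duality $(Q^{*}x)(\omega)=\overline{x}(\omega)$, and the separability of $C(\Omega)$) are routine.
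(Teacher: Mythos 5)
Your argument is correct and follows essentially the same route as the paper: invariance of $\mu_e$ gives $(\overline{x},\mu_e)=(x,\mu_e)$, ergodicity forces $\overline{x}$ to equal that constant off a $\mu_e$-null set, and a countable dense family of test functions yields a single full-measure set of points $\omega$ with $\mu_\omega=\mu_e$. The only cosmetic difference is that the paper phrases the first step via $V_n\mu_e=\mu_e$ and dominated convergence and invokes $\|V_n\|\le 1$ to pass from the dense set to all of $X$, whereas you work directly with $Q\mu_e=\mu_e$ and note that two measures agreeing on a dense subset of $C(\Omega)$ coincide; both are sound.
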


In other words, for any convergent ergodic sequence
$\mathcal{V}$, every ergodic measure determines the
asymptotic $\mathcal{V}$-distribution of some orbit.

\begin{proof}
Let $\mu\in \mathcal{P}_{e}(\Omega)$, $x\in X$, $U^{*}_{n}=V_{n}$
and $c(x)=(x,\mu)$. Under the assumptions of the lemma,
$(U_{n}x,\mu)=(x,V_{n}\mu)=(x,\mu)$, the dual sequence
$(U_{n}x)(\omega)$ converges to $\overline{x}(\omega)$ for all
$\omega\in \Omega$, $\overline{x}=\overline{x}\circ\varphi$, and
$(\overline{x},\mu)=c(x)$ by the Lebesgue theorem. An argument
like \cite[Proposition 7.15, (i)~$\Rightarrow$~(iv)]{3} shows
that, by virtue of the ergodicity of~$\mu$, the bounded invariant
function $\overline{x}\in X_{1}$ is identically equal to a
constant $c(x)$ on some Borel set $\Theta_{x,\mu}\subseteq \Omega$
of full $\mu$-measure. Consequently, for the points $\omega \in
\Theta_{x,\mu}$ we have
\begin{equation}\label{e4.1}
(x,V_{n}\delta_{\omega})=(U_{n}x,\delta_{\omega})\rightarrow
(\overline{x},\delta_{\omega})=(\overline{x},\mu)=(x,\mu).
\end{equation}
Now we take $x$ from an arbitrary countable set~$Y$
everywhere dense in~$X$ and obtain relations~\eqref{e4.1}
for $x\in Y$ and $\omega\in \Theta_{\mu}$, where
$\Theta_{\mu}=\bigcap_{x\in Y}\Theta_{x,\mu}$ and
$\mu(\Theta_{\mu})=1$. Since $\|V_{n}\|\leq 1$ for all
$n\in \mathbb{N}_{0}$, it follows that the same is true for
any $x\in X$ and $\omega\in \Theta_{\mu}$. Thus,
$V_{n}\delta_{\omega}\stackrel{\operatorname{w^{*}}}{\longrightarrow}\mu$
for $\omega\in \Theta_{\mu}$.
\end{proof}

For a convergent ergodic sequence $\mathcal{V}=\{V_{n}\}$,
we set
$$
\Omega_{\mathcal{V}}=\{\omega\in \Omega: \mu_{\omega}\in
\mathcal{P}_{e}(\Omega)\},
$$
where
$V_{n}\delta_{\omega}\stackrel{\operatorname{w^{*}}}{\longrightarrow}
\mu_{\omega}$; then, for \textit{ergodic} measures~$\mu$,
the subsets
$\Omega_{\mu,\mathcal{V}}=\Psi^{-1}_{\mathcal{V}}\mu$ form
partitions of $\Omega_{\mathcal{V}}$ into
$\mathcal{V}$-quasi-ergodic components. The sets
$\Omega_{\mathcal{V}}$ and $\Omega_{\mu,\mathcal{V}}$ are
bi-invariant. They are Borel sets, which follows by a
purely topological reasoning \cite[pp.~119--120]{19} not
related in any way to the specific features of Ces\'aro
averaging. Since $\Omega_{\mu,\mathcal{V}}\supseteq
\Theta_{\mu}$, where $\Theta_{\mu}$ is the set in the proof
of Lemma~\ref{t4.1}, we obtain the following assertion.

\begin{corollary}\label{t4.2}
If an ergodic sequence $\mathcal{V}$
is convergent, then to each ergodic measure~$\mu$ there
corresponds a Borel $\mathcal{V}$-quasi-ergodic set
$\Omega_{\mu,\mathcal{V}}$ of full $\mu$-measure.
\end{corollary}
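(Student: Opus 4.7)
The plan is to observe that Corollary~\ref{t4.2} is essentially a direct consequence of the proof of Lemma~\ref{t4.1} combined with the topological fact (cited from \cite{19}) that the level sets $\Omega_{\mu,\mathcal{V}} = \Psi_{\mathcal{V}}^{-1}\mu$ of the first Baire class map $\Psi_{\mathcal{V}}$ are Borel. So the task is mainly to assemble these two ingredients.

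First, I would recall from the proof of Lemma~\ref{t4.1} the distinguished set $\Theta_{\mu}=\bigcap_{x\in Y}\Theta_{x,\mu}$, where $Y$ is a fixed countable dense subset of $X$. The construction there shows two things: $\mu(\Theta_{\mu})=1$, and for every $\omega\in\Theta_{\mu}$ the sequence $V_{n}\delta_{\omega}$ converges $w^{*}$ to $\mu$ itself. Since $\mu$ is ergodic, this says precisely that $\mu_{\omega}=\mu\in\mathcal{P}_{e}(\Omega)$ for every $\omega\in\Theta_{\mu}$, so $\Theta_{\mu}\subseteq \Psi_{\mathcal{V}}^{-1}\mu=\Omega_{\mu,\mathcal{V}}$. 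Hence $\mu(\Omega_{\mu,\mathcal{V}})\ge\mu(\Theta_{\mu})=1$, giving the full-measure assertion.

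For the Borel measurability, I would invoke the topological argument referenced to \cite[pp.~119--120]{19}: the mapping $\omega\mapsto\mu_{\omega}=Q\delta_{\omega}$ is a pointwise limit of the continuous maps $\omega\mapsto V_{n}\delta_{\omega}$ into the metrizable compact space $\mathcal{P}(\Omega)$ (in its $w^{*}$ topology), hence of the first Baire class; preimages of singletons under such maps are Borel, as are the sets $\Omega_{\mathcal{V}}$ and the quasi-ergodic components~$\Omega_{\mu,\mathcal{V}}$. Bi-invariance follows from $V\mu_{\varphi\omega}=\mu_{\omega}$ together with the fact that $\mu_{\omega}$ is already $V$-invariant.

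There is no real obstacle to overcome; the only point that deserves care is making clear that the set $\Theta_{\mu}$, despite being constructed inside the proof of Lemma~\ref{t4.1} (and a priori just a subset witnessing the conclusion there), automatically lies in $\Omega_{\mu,\mathcal{V}}$ because the limit measure on $\Theta_{\mu}$ is exactly the ergodic measure~$\mu$. Once this inclusion is noted, the corollary reduces to the Borel-measurability remark.
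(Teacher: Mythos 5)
Your proposal is correct and follows essentially the same route as the paper: the author likewise derives the full-measure claim from the inclusion $\Omega_{\mu,\mathcal{V}}\supseteq\Theta_{\mu}$ with $\Theta_{\mu}$ taken from the proof of Lemma~\ref{t4.1}, and refers the Borel measurability of $\Omega_{\mathcal{V}}$ and $\Omega_{\mu,\mathcal{V}}$ to the purely topological argument in \cite[pp.~119--120]{19}. Your added remarks on the first Baire class of $\omega\mapsto Q\delta_{\omega}$ and on bi-invariance merely make explicit what the paper states in the surrounding text.
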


Now let us discuss the main topic of the present paper.

\begin{definition}\label{t4.3}
We say that an $\mathbb{N}_{0}$-system $(\Omega,\varphi)$
is \textit{ergodically decomposable} if there exists a
convergent operator ergodic sequence~$\mathcal{V}$ such
that $\Omega_{\mathcal{V}}=\Omega$, or, equivalently,
$\Psi_{\mathcal{V}}\Omega =\mathcal{P}_{e}(\Omega)$.
\end{definition}

In this case, the \textit{topological dynamical system}
$(\Omega,\varphi)$ essentially admits a decomposition into ergodic
subsystems $(\Omega_{\mu,\mathcal{V}},\varphi)$, $\mu\in
\mathcal{P}_{e}(\Omega)$. Here $\mathcal{V}\rightarrow Q\in
\operatorname{Ker} \mathcal{K}_{c}$, and to each continuous
function $x\in X$ there corresponds a function
$\overline{x}=Q^{*}x\in X_{1}$ taking a constant value
$(\overline{x},\mu)=(x,\mu)$ on each quasi-ergodic set
$\Omega_{\mu,\mathcal{V}}$. Thus, for each measure $\mu\in
\mathcal{P}_{e}(\Omega)$, the \textit{measure-preserving dynamical
system} $(\Omega_{\mu,\mathcal{V}},\varphi)$ is ergodic with
respect to $\mu$ in the standard sense~\cite[Definition~6.18]{3}.
In the interpretation given in~\cite[Sec.~4.1]{12}, the ergodic
decomposability of the semicascade $(\Omega,\varphi)$ means that
the asymptotic $\mathcal{V}$-distributions of all orbits are
determined by ergodic measures. Note also that the mapping
$\Psi_{\mathcal{V}}:\Omega\rightarrow \mathcal{P}_{e}(\Omega)$
inducing the decomposition of $(\Omega,\varphi)$ is a sequential
pointwise limit of continuous mappings and hence belongs to the
first Baire class, so that its points of continuity form a dense
$G_{\delta}$-set in~$\Omega$.

It turns out that the ergodic decomposability on an
$\mathbb{N}_{0}$-dynamical system is related to the
existence of operator ergodic sequences converging to the
extreme points of the kernel of the semigroup
$\mathcal{K}_{c}=\mathcal{K}_{c}(\Omega,\varphi)$.

\begin{proposition}\label{t4.4}
If an ergodic sequence $\mathcal{V}$ converges to
$Q\in\operatorname{ex} \operatorname{Ker} \mathcal{K}_{c},$
then the dynamical system $(\Omega,\varphi)$ is ergodically
decomposable.
\end{proposition}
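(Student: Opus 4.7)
I argue by contradiction. Suppose $\mathcal{V}\to Q\in\operatorname{ex}\operatorname{Ker}\mathcal{K}_{c}$ but some $\mu_{\omega_{0}}=Q\delta_{\omega_{0}}$ fails to be ergodic. Since the ergodic measures are exactly the extreme points of $\mathcal{P}_{in}(\Omega)$, I can write $\mu_{\omega_{0}}=\alpha\nu_{1}+(1-\alpha)\nu_{2}$ with $\nu_{1},\nu_{2}\in\mathcal{P}_{in}(\Omega)$ distinct and $\alpha\in(0,1)$. The goal is to upgrade this local splitting to a global decomposition $Q=\alpha Q_{1}+(1-\alpha)Q_{2}$ with $Q_{1}\neq Q_{2}$ in $\operatorname{Ker}\mathcal{K}_{c}$, which would contradict extremality.

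First I would check that such a decomposition really produces a contradiction, i.e., that $\operatorname{Ker}\mathcal{K}_{c}$ is convex. Since every element of the kernel is a projection with common range $\operatorname{fix}(V)$, any $Q_{i},Q_{j}$ in it satisfy $Q_{i}Q_{j}=Q_{j}$, and a direct computation then gives $(\alpha Q_{1}+(1-\alpha)Q_{2})^{2}=\alpha Q_{1}+(1-\alpha)Q_{2}$; unit norm, the relation $VR=R$, and membership in $\mathcal{K}_{c}$ are all preserved under convex combinations. So a nontrivial decomposition of $Q$ inside $\operatorname{Ker}\mathcal{K}_{c}$ genuinely contradicts $Q\in\operatorname{ex}\operatorname{Ker}\mathcal{K}_{c}$.

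For the construction of $Q_{1},Q_{2}$, I would use the ergodic disintegration $\mu_{\omega}=\int_{\mathcal{P}_{e}(\Omega)}\nu\,d\lambda_{\omega}(\nu)$ at each $\omega$, together with the orbit invariance $\mu_{\varphi\omega}=\mu_{\omega}$, which follows from $VQ=QV=Q$ (the identity $QV=Q$ in turn coming from $\|V_{n}V-V_{n}\|\to 0$ for the weights $s_{n,k}$). Picking a Borel set $A\subseteq\mathcal{P}_{e}(\Omega)$ tailored to the splitting at $\omega_{0}$, the plan is to define $Q_{1}\delta_{\omega}$ and $Q_{2}\delta_{\omega}$ as the normalized $A$- and $A^{c}$-pieces of the disintegration of $\mu_{\omega}$, and then to realize the $Q_{i}$ as W$^{*}$O-limits of subsampled ergodic sequences $\mathcal{V}_{n}^{(i)}$ obtained from $\mathcal{V}_{n}=\sum_{k}s_{n,k}V^{k}$ by restricting the weights $s_{n,k}$ to indices $k$ with $\varphi^{k}\omega_{0}$ in a $\mu_{\omega_{0}}$-continuity set that separates $\nu_{1}$ from $\nu_{2}$, followed by renormalization.

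The principal obstacle is to arrange this decomposition with a \emph{fixed} scalar weight $\alpha$ rather than the $\omega$-dependent weight $\lambda_{\omega}(A)$ that the naive renormalization produces; this forces a careful choice of $A$ so that $\lambda_{\omega}(A)\equiv\alpha$ uniformly in $\omega$, exploiting the invariance $\lambda_{\varphi\omega}=\lambda_{\omega}$ and the constraints on $\lambda_{\omega}$ coming from the fact that $Q$ is the W$^{*}$O-limit of the particular sequence $\mathcal{V}_{n}$. One must simultaneously verify that the subsampled weights satisfy a total-variation condition ensuring $(\operatorname{Id}-V)\mathcal{V}_{n}^{(i)}\to 0$ in W$^{*}$O (so $\mathcal{V}_{n}^{(i)}$ is ergodic and its limit lies in $\operatorname{Ker}\mathcal{K}_{c}$), and that the limiting operators agree with the $Q_{i}$ on every $\delta_{\omega}$, not just $\delta_{\omega_{0}}$. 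This uniform-in-$\omega$ compatibility, rather than the single-point splitting itself, is the real technical step of the proof.
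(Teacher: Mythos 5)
The paper's own proof is a two-line deduction: it quotes \cite[Proposition~2.10]{21}, which asserts that every $Q\in\operatorname{ex} \operatorname{Ker} \mathcal{K}_{c}$ maps $D(\Omega)$ into $\mathcal{P}_{e}(\Omega)$, and then observes that consequently $V_{n}\delta_{\omega}\stackrel{w^{*}}{\longrightarrow}Q\delta_{\omega}\in\mathcal{P}_{e}(\Omega)$ for every $\omega$, i.e.\ $\Omega_{\mathcal{V}}=\Omega$. You have correctly identified that this mapping property is the only real content of the statement, and your verification that $\operatorname{Ker}\mathcal{K}_{c}$ is convex (all its elements are projections with the common range $\operatorname{fix}(V)$, hence $Q_{i}Q_{j}=Q_{j}$ and convex combinations are again unit-norm projections $R$ with $VR=R$ lying in the convex compact set $\mathcal{K}_{c}$) is sound and is indeed needed for ``extreme point'' to yield a contradiction.

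However, the core of your argument --- producing $Q_{1}\neq Q_{2}$ in $\operatorname{Ker}\mathcal{K}_{c}$ with $Q=\alpha Q_{1}+(1-\alpha)Q_{2}$ from the non-ergodicity of the single measure $\mu_{\omega_{0}}$ --- is only a plan, and you say yourself that the uniform-in-$\omega$ compatibility ``is the real technical step of the proof.'' That step is not carried out, and the mechanism you propose is unlikely to deliver it. Two concrete problems. First, restricting the weights $s_{n,k}$ to the indices $k$ for which $\varphi^{k}\omega_{0}$ lies in a prescribed set and renormalizing will in general destroy ergodicity: already for Ces\'aro weights restricted to the even indices one gets $\sum_{k}|s'_{n,k}-s'_{n,k-1}|\not\to 0$ and $(\operatorname{Id}-V)V^{(i)}_{n}\not\to 0$ (consider a $2$-periodic system); the W$^*$O-limit points of the subsampled sequences then lie in $\mathcal{K}_{c}$ but need not lie in $\operatorname{Ker}\mathcal{K}_{c}$, and a decomposition $Q=\alpha T_{1}+(1-\alpha)T_{2}$ with $T_{i}\notin\operatorname{Ker}\mathcal{K}_{c}$ does not contradict extremality \emph{within the kernel}. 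Second, the subsampled operators are single elements of $\mathcal{L}(X^{*})$ built from the orbit of the one point $\omega_{0}$, and nothing forces their limits to act on $\delta_{\omega}$ for $\omega\neq\omega_{0}$ as the normalized $A$-piece of the disintegration of $\mu_{\omega}$ --- which is exactly what you would need for $Q_{1},Q_{2}$ to be well defined, distinct, and to recombine to $Q$ with a constant weight $\alpha$. So the proposal reduces the Proposition to the correct key claim (the one the paper imports from \cite{21}) but does not prove it; as written there is a genuine gap.
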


\begin{proof}
Let $\mathcal{V}=\{V_{n}\}$. By
\cite[Proposition~2.10]{21}, one has
$Q:D(\Omega)\rightarrow \mathcal{P}_{e}(\Omega)$, and since
$V_{n}\overset{\mathrm{W^*O}}{\longrightarrow}Q$, it
follows that
$V_{n}\delta_{\omega}\stackrel{\operatorname{w^{*}}}{\longrightarrow}\mu\in
\mathcal{P}_{e}(\Omega)$ for each point $\omega\in\Omega$.
Thus, $\Omega_{\mathcal{V}}=\Omega$, and the system
$(\Omega,\varphi)$ is ergodically decomposable.
\end{proof}

\begin{theorem}[main theorem]\label{t4.5}
Tame
$\mathbb{N}_{0}$-systems $(\Omega,\varphi)$ are ergodically
decomposable.
\end{theorem}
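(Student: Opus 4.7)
The plan is to combine Proposition~\ref{t4.4} with a Krein--Milman argument on $\operatorname{Ker}\mathcal{K}_{c}$: it suffices to produce a convergent ergodic sequence whose limit lies in $\operatorname{ex}\operatorname{Ker}\mathcal{K}_{c}$, and then to invoke tameness only at the final extraction step.

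First I would verify that $\operatorname{Ker}\mathcal{K}_{c}$ is a nonempty, convex, $\mathrm{W^{*}O}$-compact subset of $\mathcal{L}(X^{*})$. Nonemptiness and compactness are inherited from $\mathcal{K}_{c}$, the kernel being the intersection of closed two-sided ideals of the compact right-topological semigroup. For convexity, take $Q_{1},Q_{2}\in\operatorname{Ker}\mathcal{K}_{c}$ and $\lambda\in[0,1]$, and set $R=\lambda Q_{1}+(1-\lambda)Q_{2}$. Then $R\in\mathcal{K}_{c}$ (which is convex by construction), $VR=R$ (so $RX^{*}\subseteq\operatorname{fix}(V)$), and $R$ acts as the identity on $\operatorname{fix}(V)$ because both $Q_{i}$ do; hence $R^{2}=R$, $\|R\|=1$, and $RX^{*}=\operatorname{fix}(V)$, so $R\in\operatorname{Ker}\mathcal{K}_{c}$.

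Since the $\mathrm{W^{*}O}$-topology on $\mathcal{L}(X^{*})$ is generated by the separating family of seminorms $T\mapsto|(x,T\mu)|$ and is therefore locally convex, the Krein--Milman theorem supplies an extreme point $Q\in\operatorname{ex}\operatorname{Ker}\mathcal{K}_{c}$. By the property of $\operatorname{Ker}\mathcal{K}_{c}$ recalled at the end of Section~\ref{s2.2}, there exists an ergodic net $\{V_{\alpha}\}\subset\operatorname{co}\mathcal{K}^{0}$ with $V_{\alpha}\overset{\mathrm{W^{*}O}}{\longrightarrow}Q$. Here tameness enters through the Fr\'echet--Urysohn property of $\mathcal{K}_{c}$, exactly as in the proof of Theorem~\ref{t3.3}\,(i): from $\{V_{\alpha}\}$ one extracts a sequence $\{V_{\alpha(n)}\}$ still $\mathrm{W^{*}O}$-convergent to $Q$, and this sequence is automatically ergodic because $Q\in\operatorname{Ker}\mathcal{K}_{c}$. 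Proposition~\ref{t4.4} applied to $\mathcal{V}=\{V_{\alpha(n)}\}$ and the extreme point $Q$ then yields the ergodic decomposability of $(\Omega,\varphi)$.

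The step I expect to carry the main subtlety is the convexity of $\operatorname{Ker}\mathcal{K}_{c}$: the description through unit-norm projections with image $\operatorname{fix}(V)$ makes convexity plausible but not tautological, and the argument above leans on the observation that every kernel element restricts to the identity on $\operatorname{fix}(V)$. If that shortcut failed, the natural fallback would be to invoke the Ellis--Namioka structure of the kernel of a compact right-topological semigroup and select a minimal idempotent of $\mathcal{K}_{c}$ to play the role of $Q$, after which Theorem~\ref{t3.3}\,(i) and Proposition~\ref{t4.4} would be applied verbatim.
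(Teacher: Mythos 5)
Your proposal is correct and follows essentially the same route as the paper: take $Q\in\operatorname{ex}\operatorname{Ker}\mathcal{K}_{c}$, realize it as the $\mathrm{W^{*}O}$-limit of an ergodic net, extract a convergent ergodic sequence via Theorem~\ref{t3.3}\,(i), and apply Proposition~\ref{t4.4}. The only difference is that you explicitly justify the existence of extreme points (convexity and compactness of $\operatorname{Ker}\mathcal{K}_{c}$ plus Krein--Milman), a step the paper leaves implicit; your convexity argument via unit-norm projections onto $\operatorname{fix}(V)$ is sound, and the Ellis--Namioka ``fallback'' is unnecessary.
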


\begin{proof}
An arbitrary projection $Q\in \operatorname{ex} \operatorname{Ker}
\mathcal{K}_{c}$ is the W$^*$O-limit of some ergodic net
$\{V_{\alpha}\}\subset \mathcal{L}(X^{*})$. By
Theorem~\ref{t3.3}\,(i), there exists an ergodic operator sequence
$V_{\alpha(n)}\overset{\mathrm{W^*O}}{\longrightarrow}Q$. The
desired assertion with $\mathcal{V}=\{V_{\alpha(n)}\}$ follows
from Proposition~\ref{t4.4}.
\end{proof}

Now let us describe the structure of all possible
decompositions of tame systems into ergodic components.

\begin{lemma}\label{t4.6}
For a tame $\mathbb{N}_{0}$-system
$(\Omega,\varphi)$, the operators $T\in
\mathcal{K}_{c}(\Omega,\varphi)$ are determined by their
values on Dirac measures.
\end{lemma}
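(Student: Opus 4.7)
The plan is to exploit the Fréchet--Urysohn property of $\mathcal{K}_{c}$ in the tame case (property~(c) of Sec.~\ref{s2.3}) so that every $T\in\mathcal{K}_{c}$ can be approached by a \emph{sequence} of operators from $\operatorname{co}\mathcal{K}^{0}$, and then to use dominated convergence to express $T\mu$ as a weak integral of the Dirac images $T\delta_{\omega}$.

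First, given $T\in\mathcal{K}_{c}(\Omega,\varphi)$, tameness yields a sequence $V_{n}\in\operatorname{co}\mathcal{K}^{0}$ with $V_{n}\stackrel{\mathrm{W^{*}O}}{\longrightarrow} T$. Writing $V_{n}=U_{n}^{*}$ with $U_{n}\in\operatorname{co}\{U^{k}\}$, the duality discussed in Sec.~\ref{s2.1} gives, for each $x\in X$, the pointwise convergence $(U_{n}x)(\omega)=(x,V_{n}\delta_{\omega})\to (x,T\delta_{\omega})$ for every $\omega\in\Omega$, with the limit function $\overline{x}(\omega)\doteq(x,T\delta_{\omega})$ bounded by $\|x\|$ and lying in~$X_{1}$.

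Second, fix an arbitrary $\mu\in X^{*}$. By the $\mathrm{W^{*}O}$-convergence of $V_{n}$ to~$T$,
\begin{equation*}
(x,T\mu)=\lim_{n\to\infty}(x,V_{n}\mu)=\lim_{n\to\infty}(U_{n}x,\mu)=\lim_{n\to\infty}\int_{\Omega}(U_{n}x)(\omega)\,d\mu(\omega).
\end{equation*}
Since $|U_{n}x|\le\|x\|$ uniformly in~$n$ and $U_{n}x\to\overline{x}$ pointwise, Lebesgue's dominated convergence theorem applied to $|\mu|$ gives the integral representation
\begin{equation*}
(x,T\mu)=\int_{\Omega}(x,T\delta_{\omega})\,d\mu(\omega),\qquad x\in X,\ \mu\in X^{*}.
\end{equation*}
This exhibits $T\mu$ as being completely determined by the assignment $\omega\mapsto T\delta_{\omega}$: if $T_{1},T_{2}\in\mathcal{K}_{c}$ coincide on $D(\Omega)$, then $(x,T_{1}\mu)=(x,T_{2}\mu)$ for all $x\in X$ and $\mu\in X^{*}$, whence $T_{1}=T_{2}$.

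The main obstacle is guaranteeing that $T$ can be realized as a \emph{sequential} (not merely net) limit of elements of $\operatorname{co}\mathcal{K}^{0}$; without this, one would be forced to invoke dominated convergence along a net, which is not generally valid for signed measures. This is precisely the content of the Fréchet--Urysohn property of $\mathcal{K}_{c}$ for tame systems listed in Sec.~\ref{s2.3}, and is where the tameness hypothesis enters the argument in an essential way. The remaining steps are routine applications of duality and dominated convergence.
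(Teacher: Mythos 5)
Your proof is correct and follows essentially the same route as the paper's: use the Fr\'echet--Urysohn property of $\mathcal{K}_{c}$ in the tame case to replace the approximating net by a sequence $V_{n}\in\operatorname{co}\mathcal{K}^{0}$, identify the pointwise limit $\overline{x}(\omega)=(x,T\delta_{\omega})$, and apply dominated convergence to recover $(x,T\mu)$ from the values on Dirac measures. The only (harmless) difference is that you state the conclusion for arbitrary $\mu\in X^{*}$ via $|\mu|$, while the paper phrases it for $\mu\in\mathcal{P}(\Omega)$.
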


\begin{proof}
Under the assumptions of the lemma, the semigroup
$\mathcal{K}_{c}$ is a Fr\'echet--Urysohn compact set, and hence
for each $T\in \mathcal{K}_{c}$ there exists a sequence
$\{V_{n}\}\subseteq \operatorname{co} \{V^{n}$, $n\geq 0\}$
converging to~$T$ in the W$^*$O-topology of the space
$\mathcal{L}(X^{*})$. If $U_{n}^{*}=V_{n}$, $x\in X$ and
$\omega\in \Omega$, then
$(x,V_{n}\delta_{\omega})=(U_{n}x)(\omega)$ and
$(U_{n}x)(\omega)\rightarrow \overline{x}(\omega)$. Here
$\overline{x}\in X_{1}$, and $(x,V_{n}\mu)=(U_{n}x,\mu)\rightarrow
(\overline{x},\mu)=(x,T\mu)$ for each measure $\mu\in
\mathcal{P}(\Omega)$ by the Lebesgue theorem. At the same time $
\overline{x}(\omega)=(x,T\delta_{\omega})$ for $\omega\in \Omega$,
hence the operator $T$ is completely defined by its values on
$D(\Omega)$.
\end{proof}

Lemma 4.6 is a refinement of a similar assertion
\cite[Theorem~3.5, (a1)$\,\Rightarrow$ (a4)]{21} where instead of
the condition $(\Omega,\varphi)\in \mathcal{D}_{\mathrm{tm}}$ the
enveloping semigroup $E(\Omega,\varphi)$ is required to be
metrizable.

\begin{corollary}\label{t4.7}
If $(\Omega,\varphi)\in \mathcal{D}_{\mathrm{tm}}$ and
$Q_{1}|_{D(\Omega)}=Q_{2}|_{D(\Omega)}$ for $Q_{1},Q_{2}\in
\operatorname{Ker} \mathcal{K}_{c}$, then $Q_{1}=Q_{2}$.
\end{corollary}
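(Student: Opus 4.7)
The plan is essentially to observe that this corollary is an immediate consequence of Lemma~\ref{t4.6}. Since $\operatorname{Ker} \mathcal{K}_{c}$ is a subset of $\mathcal{K}_{c}=\mathcal{K}_{c}(\Omega,\varphi)$, both $Q_{1}$ and $Q_{2}$ are elements of $\mathcal{K}_{c}$, and Lemma~\ref{t4.6} applies: each operator in $\mathcal{K}_{c}$ is fully determined by its action on the Dirac measures $D(\Omega)\subset X^{*}$.

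So the argument I would write is a single step. Assume $Q_{1}|_{D(\Omega)}=Q_{2}|_{D(\Omega)}$. By Lemma~\ref{t4.6}, for any $\mu\in \mathcal{P}(\Omega)$ and any $x\in X$ one has $(x,Q_{i}\mu)=(\overline{x}_{i},\mu)$, where $\overline{x}_{i}(\omega)=(x,Q_{i}\delta_{\omega})$ is the pointwise limit of the corresponding averaging sequence. The hypothesis forces $\overline{x}_{1}=\overline{x}_{2}$ on $\Omega$, hence $(x,Q_{1}\mu)=(x,Q_{2}\mu)$ for all $x\in X$ and $\mu \in \mathcal{P}(\Omega)$. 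Since $\mathcal{P}(\Omega)$ linearly spans a $w^{*}$-dense subset of $X^{*}$ (every bounded Radon measure is a finite linear combination of probability measures) and $Q_{1},Q_{2}$ are bounded linear operators, this yields $Q_{1}=Q_{2}$ on all of $X^{*}$.

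There is no real obstacle: the only subtlety is passing from agreement on $D(\Omega)$ to agreement on $\mathcal{P}(\Omega)$, which Lemma~\ref{t4.6} already provides, and then from $\mathcal{P}(\Omega)$ to $X^{*}$, which follows by linearity and boundedness. I would keep the write-up to two or three sentences, since the corollary is a genuine corollary rather than a new result.
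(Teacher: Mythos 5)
Your proposal is correct and matches the paper's treatment: the paper gives no separate argument for Corollary~\ref{t4.7}, presenting it as an immediate consequence of Lemma~\ref{t4.6} applied to $Q_{1},Q_{2}\in\operatorname{Ker}\mathcal{K}_{c}\subseteq\mathcal{K}_{c}$, exactly as you do. Your extra step passing from $\mathcal{P}(\Omega)$ to all of $X^{*}$ by linearity (via the Jordan decomposition of a Radon measure) is a harmless and valid elaboration of what the paper leaves implicit.
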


Hence we readily find that in the tame case the condition
$Q\in \operatorname{ex} \operatorname{Ker} \mathcal{K}_{c}$
is not only sufficient but also necessary for the relation
$Q:D(\Omega)\rightarrow \mathcal{P}_{e}(\Omega)$, $Q\in
\operatorname{Ker} \mathcal{K}_{c}$ to hold. For
$(\Omega,\varphi)\in \mathcal{D}_{\mathrm{tm}}$, it is
natural to define quasi-ergodic sets based on elements
$Q\in \operatorname{ex} \operatorname{Ker} \mathcal{K}_{c}$ rather
than convergent ergodic sequences~$\mathcal{V}$. Namely, we
set
$$
\Omega_{\mu,Q}=\{\omega\in \Omega:Q\delta_{\omega}=\mu \}, \quad
\mu\in \mathcal{P}_{e}(\Omega).
$$
We see that Borel bi-invariant quasi-ergodic sets $\Omega_{\mu,Q}$
of full $\mu$-measure form a partition~$\Phi_{Q}$ of the phase
space~$\Omega$. The set~$\Lambda$ of all ergodic sequences
$\mathcal{V}\rightarrow Q$ splits into disjoint
classes~$\Lambda_{Q}$ corresponding to distinct~$Q$. The elements
$\mathcal{V}\in \Lambda_{Q}$ define a relationship between the
dynamics of the semicascade $(\Omega,\varphi)$ and the ergodic
measures; namely, the asymptotic $\mathcal{V}$-distribution of
each orbit~$o(\omega)$ is determined by the measure
$\mu=Q\delta_{\omega}$. There exists a one-to-one (by virtue of
Corollary~\ref{t4.7}) correspondence between the projections $Q\in
\operatorname{ex} \operatorname{Ker} \mathcal{K}_{c}$, the
partitions~$\Phi_{Q}$ of the phase space~$\Omega$ into
quasi-ergodic sets, and the partitions~$\Lambda_{Q}$ of~$\Lambda$
converging to the extreme points of the kernel of the semigroup
$\mathcal{K}_{c}(\Omega,\varphi)$ of ergodic operator sequences.

\section{Supplement}\label{s5}

Here we consider a number of typical examples of tame and untame
$\mathbb{N}_{0}$-dynamical systems. Let $I=[0,1]$.
\begin{itemize}
\item[(1)] According to~\cite[Proposition 10.5]{8}
and~\cite[Sec.~9]{6}, every semicascade generated by a
homeomorphism of~$I$ or~$\mathbb{S}^{1}$ has a metrizable Ellis
semigroup and hence is tame.
\item[(2)] The left Bernoulli shift on the set
$\Omega=\{0,1\}^{\mathbb{N}_{0}}$ of sequences
$\omega_{0},\omega_{1},\dots$ with the standard metric
$\rho(\omega,\nu)=(1+\min\{k:\omega_{k}\neq \nu_{k}\})^{-1}$
generates a untame $\mathbb{N}_{0}$-system $(\Omega,\varphi)$,
which, however, admits tame subsystems $(\Theta,\varphi)$. Here is
an elegant description of these systems: every infinite set $L
\subseteq \mathbb{N}_{0}$ contains an infinite subset $K\subseteq
L$ such that the projection $\pi_{K}(\Theta)$ is a countable
subset of $\{0,1\}^{K}$~\cite[Theorem~4.7]{9}.
\item[(3)] The set of periodic points of the semicascade
$(I,\varphi)$ in the example in \cite[pp.~147--149]{1} is
nonclosed, and every orbit $o(\omega)$,$\omega\in I,$ is either
eventually periodic ($\varphi^{k}\omega=\varphi^{k+p}\omega$ for
some $k\geq 0$, $p\geq 1$) or its limit points fill the classical
Cantor set. This semicascade proves to be tame~\cite[Example
5.8\,(c)]{15}.
\item[(4)] On the other hand, every semicascade $(I,\varphi)$
admitting periodic points with period that is not a power of~$2$
is not tame \cite[Example 5.8\,(e)]{15}.
\item[(5)] A slight modification of the argument
in~\cite[p.~2354]{6} shows that the projective action of an
arbitrary invertible operator $T\in GL(n,\mathbb{R}^{n})$, $n\geq
2$, induces a tame semicascade on the sphere~$\mathbb{S}^{n-1}$.
\end{itemize}

Examples~(3) and~(5) show that tame systems can have a rather
nontrivial phase dynamics.

Recently, using function--theoretic argument, Lebedev obtained a
criteria that allows to distinguish tame and untame affine
endomorphisms of the torus $\varphi:\omega\rightarrow
A\omega+b\quad(\omega\in \mathbb{T}^{d},\;d\geq 1)$ with an
integer matrix $A$ and an arbitrary shift $b\in \mathbb{T}^{d}$.
If $\operatorname{det} A=\pm 1$, then $\varphi$ is an
automorphism.

\begin{theorem}[Lebedev~\cite{18}]\label{t5.1}
A semicascade $(\mathbb{T}^{d},\varphi)$ is tame if and only if
$A^{k}=A^{l}$ for some $k,l\in \mathbb{N}_{0}$, $k\neq l$.
\end{theorem}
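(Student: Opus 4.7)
The plan is to treat the two implications separately: sufficiency reduces to an enveloping-semigroup calculation, while necessity is genuinely harmonic-analytic.

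For sufficiency, assume $A^{k}=A^{l}$ with $0\le k<l$. Then $\{A^{n}\}$ becomes periodic from index $k$, so $\mathcal{A}\doteq \{A^{n}:n\ge 0\}$ is finite. Writing $\varphi^{n}(\omega)=A^{n}\omega+S_{n}b$ with $S_{n}=I+A+\cdots+A^{n-1}$, I would partition $\mathbb{N}_{0}$ into the finitely many sets $N_{M}=\{n:A^{n}=M\}$, $M\in\mathcal{A}$. On each $N_{M}$ the iterate $\varphi^{n}$ is completely determined by the translation vector $S_{n}b\in\mathbb{T}^{d}$, so the closure of $\{\varphi^{n}:n\in N_{M}\}$ in $(\mathbb{T}^{d})^{\mathbb{T}^{d}}$ is homeomorphic to a closed subset of the metric space $\mathbb{T}^{d}$. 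A finite union of metrizable compacta is metrizable, hence $E(\mathbb{T}^{d},\varphi)$ is metrizable, and tameness follows from property~(a) of Section~\ref{s2.3}.

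For necessity, suppose toward a contradiction that the matrices $A^{n}$ are pairwise distinct; I aim to violate Definition~\ref{t2.2}. A short argument (take a common eventual period of the $A^{\top}$-orbits of the basis vectors $e_{1},\dots,e_{d}$) shows that at least one $m\in\mathbb{Z}^{d}$ has infinite orbit under $A^{\top}$. For the character $\chi_{m}(\omega)=\exp(2\pi i\langle m,\omega\rangle)$, a direct calculation gives
\[
\chi_{m}\circ\varphi^{n}=c_{m,n}\,\chi_{(A^{\top})^{n}m},\qquad |c_{m,n}|=1,
\]
so composition with $\varphi^{n}$ merely shifts the frequency along the $A^{\top}$-orbit of $m$ up to unimodular scalars. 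The remaining task is to extract a subsequence $n(k)$ such that $\{(A^{\top})^{n(k)}m\}$ is a Sidon subset of $\mathbb{Z}^{d}$, which would supply the uniform lower bound $\inf_{a}\bigl\|\sum_{k}a_{k}\,\chi_{m}\circ\varphi^{n(k)}\bigr\|_{C(\mathbb{T}^{d})}\ge \delta>0$ over normalized finitely supported $a\in l^{1}$ demanded by Definition~\ref{t2.2}.

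The main obstacle is precisely this Sidon extraction, which bifurcates by the spectral type of $A$. If $A$ has an eigenvalue $\lambda$ with $|\lambda|\ne 1$, then along the corresponding eigendirection $(A^{\top})^{n}m$ has a coordinate growing geometrically, and a classical Hadamard-gap argument (after a suitable projection $\mathbb{Z}^{d}\to\mathbb{Z}$) produces the Sidon subsequence. If all eigenvalues of $A$ lie on the unit circle yet $\{A^{n}\}$ is still infinite, then by Kronecker's theorem these eigenvalues are roots of unity and the infinitude forces a nontrivial Jordan block; the iterates $(A^{\top})^{n}m$ then grow only polynomially in a nilpotent direction, and lacunarity must be engineered by a sufficiently aggressive thinning $n(k)\uparrow\infty$. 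Unifying these two cases into a single Sidon construction, and verifying that the unimodular factors $c_{m,n}$ together with the arbitrary shift $b$ cause no harm (they do not, since one works with absolute convex combinations), is where the function-theoretic machinery of Lebedev~\cite{18} carries the decisive weight.
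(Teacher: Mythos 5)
First, a point of reference: the paper contains no proof of Theorem~\ref{t5.1} --- it is imported verbatim from Lebedev~\cite{18} --- so there is no internal argument to compare yours against. On its own terms, your sufficiency half is complete and correct: if $A^{k}=A^{l}$ then $\{A^{n}\}$ is finite, each set $\overline{\{\varphi^{n}:n\in N_{M}\}}$ is the homeomorphic image of a closed subset of $\mathbb{T}^{d}$ under the continuous injection $c\mapsto(\omega\mapsto M\omega+c)$, and a compact Hausdorff space that is a finite union of closed metrizable subspaces is metrizable (for instance because $C(E)$ then embeds isometrically into the separable space $\bigoplus_{M}C\bigl(\overline{\{\varphi^{n}:n\in N_{M}\}}\bigr)$); tameness follows from property~(a) of Sec.~\ref{s2.3}. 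The reduction of necessity to finding $m\in\mathbb{Z}^{d}$ with infinite $A^{\top}$-orbit, and the identity $\chi_{m}\circ\varphi^{n}=c_{m,n}\chi_{(A^{\top})^{n}m}$, are also correct.

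The gap is exactly where you flag it: the Sidon extraction is asserted, not proved, and ``this is where Lebedev's machinery carries the decisive weight'' is not a proof. Worse, the spectral bifurcation you propose is misdirected --- the relevant dichotomy is the growth of the orbit of your particular $m$, not the spectrum of $A$ (take $A=\operatorname{diag}(2)\oplus J$ with $J$ a nontrivial unipotent Jordan block and $m$ supported on the $J$-component: $A$ has an eigenvalue off the unit circle, yet $(A^{\top})^{n}m$ grows only polynomially). Fortunately the step you are missing is classical and short: every infinite subset of $\mathbb{Z}^{d}$ contains an infinite Sidon set. Since the orbit is infinite, its elements are pairwise distinct and unbounded in norm, so some coordinate functional $\pi:\mathbb{Z}^{d}\to\mathbb{Z}$ is unbounded on it; choose $n(1)<n(2)<\cdots$ with $0\neq|\pi((A^{\top})^{n(k+1)}m)|\geq 3\,|\pi((A^{\top})^{n(k)}m)|$. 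Then $\pi$ is injective on $\{\gamma_{k}\}\doteq\{(A^{\top})^{n(k)}m\}$ and $\{\pi(\gamma_{k})\}$ is Hadamard-lacunary, hence Sidon in $\mathbb{Z}$ with some constant $\delta>0$; restricting $f=\sum_{k}a_{k}\chi_{\gamma_{k}}$ to the one-parameter subgroup dual to $\pi$ gives $\|f\|_{C(\mathbb{T}^{d})}\geq\delta\sum_{k}|a_{k}|$, so $\{\gamma_{k}\}$ is Sidon in $\mathbb{Z}^{d}$. With this lemma inserted (plus the routine remarks that the unimodular factors $c_{m,n(k)}$ are absorbed into the coefficients $a_{k}$, and that one passes to real and imaginary parts if $X$ is the real $C(\Omega)$), your necessity argument closes and no case analysis on $A$ is needed.
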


If $\operatorname{det}A=\pm 1$, then the conclusion of the theorem
is $A^{k}=\operatorname{Id}$. In particular, the automorphism
$\varphi:(\omega_{1},\omega_{2})\rightarrow
(\omega_{1}+\omega_{2},\omega_{2})$ of the torus $\mathbb{T}^{2}$
is not tame.

{\bf Acknowledgements.} The author is thankful to V. Lebedev, H.
Kreidler, and M. Megrelishvili for ideas, helpful suggestions, and
inspiring discussions.

\end{document}